\newcommand{\X}[1][n]{X^{[#1]}}
\newcommand{\gr}{\Gamma}
\newcommand{\Pa}{\mathcal{P}}
\newcommand{\Qa}{\mathcal{Q}}
\newcommand{\St}{{\rm St}}
\newcommand{\RiSt}{{\rm RiSt}}
\newcommand{\Sym}{\mathop{\rm Sym}\nolimits}
\newcommand{\Aut}{\mathop{\rm Aut}\nolimits}
\author{Ievgen~V.~Bondarenko, Igor~O.~Samoilovych}
\title{\textbf{On finite generation of self-similar groups of finite type}}
\newtheorem{theorem}{Theorem}
\newtheorem{proposition}[theorem]{Proposition}
\newtheorem{corollary}[theorem]{Corollary}
\theoremstyle{definition}
\newtheorem{remark}{Remark}
\begin{document}
\maketitle

\begin{abstract}
A self-similar group of finite type is the profinite group of all automorphisms of a regular rooted
tree that locally around every vertex act as elements of a given finite group of allowed actions. We
provide criteria for determining when a self-similar group of finite type is finite, level-transitive,
or topologically finitely generated.  Using these criteria and GAP computations we show that for the
binary alphabet there is no infinite topologically finitely generated self-similar group given by
patterns of depth $3$, and there are $32$ such groups for depth~$4$.\\

\noindent \textbf{Mathematics Subject Classification 2010}: 20F65, 20F05, 20E08

\vspace{0.1cm} \noindent \textbf{Keywords}: self-similar group, finite generation,
branch group, profinite group

\end{abstract}

\section{Introduction}

There are two important classes of groups acting on regular rooted trees that have arisen as a generalization of the
Grigorchuk group: self-similar groups and branch groups. An automorphism group of a regular rooted tree is self-similar
if the restriction of the action of every its element onto every subtree can be given again by an element of the group.
There are many examples of self-similar groups with numerous extreme properties (like the Grigorchuk group) and this
class of groups is very promising for looking different counterexamples. At the same time, self-similar groups appear
naturally in many areas of mathematics and have strong connections with fractal geometry, dynamical systems, automata
theory (see \cite{self_sim_groups} and the references therein). Branch groups are automorphism groups of a tree whose
subgroup lattice is similar to the tree \cite{BGS:branch}. This class plays an important role in classification of
just-infinite groups \cite{G:just_inf}.

Self-similar groups of finite type have arisen as the closure of certain self-similar branch groups in
the topology of the tree. It was noticed in \cite[Section~7]{G:solved} that the closure of the
Grigorchuk group is a profinite self-similar group that can be described by a finite group of allowed
local actions on a finite tree (obtained from the binary tree by truncating at some depth).
R.I.\,Grigorchuk used this observation to define a self-similar group of finite type as the group of
all tree automorphisms that locally around every vertex act as elements of a given finite group (see
precise definition in the next section). The term ``group of finite type'' comes from the analogy with
shifts of finite type in symbolic dynamics \cite{symb_dyn} (note that a different term, namely finitely
constrained group, is used in \cite{S:Hausd,S:pattern}). Every self-similar group of finite type with
transitive action on levels of the tree is a profinite branch group by
\cite[Proposition~7.5]{G:solved}, and conversely, the closure (and profinite completion) of a
self-similar regular branch group with congruence subgroup property is a self-similar group of finite
type by \cite[Theorem~3]{S:Hausd}. The last observation was the main ingredient to compute the
Hausdorff dimension of such branch groups in \cite{S:Hausd}.

Although a self-similar group of finite type is easy to define by a finite group of patterns, it is not
clear what are the properties of the group. In particular, R.I.\,Grigorchuk asked in
\cite[Problem~7.3(i)]{G:solved} under what conditions a self-similar group of finite type is
topologically finitely generated. In this note we address this question and establish certain criterion
in Theorem~\ref{thm_fin_gen} as well as some necessary and sufficient conditions. We also answer such
basic questions like how to check whether a self-similar group of finite type is trivial, finite, or
acts transitively on levels of the tree.

The closure of the Grigorchuk group is a self-similar group of finite type defined by patters of depth $4$ over the
binary tree. The closure of groups defined in \cite{S:Hausd} give examples of infinite finitely generated self-similar
groups of finite type defined by patterns of depth $d$ for any $d\geq 4$. For depth $2$ and binary tree every
self-similar group of finite type is either finite or not finitely generated as shown in \cite{S:pattern}. The only
unknown case was for depth $3$. Using developed criteria and GAP computations we prove that there is no infinite finitely
generated self-similar group of finite type defined by patterns of depth $3$ over the binary tree. For depth $4$ there
are $32$ such groups (including the closures of the Grigorchuk group and the iterated monodromy group of $z^2+i$
\cite{GSS:z2+i}).

\section{Self-similar groups of finite type}

In this section we first recall all needed definitions and introduce self-similar groups of finite type
(see \cite{self_sim_groups,G:solved} for more information). After that we study conditions when a
self-similar group of finite type is trivial, finite, or level-transitive.

\vspace{0.2cm}\textbf{Tree $X^{*}$.} Let $X$ be a finite alphabet with at least two letters. Let $X^{*}$ be the free
monoid freely generated by $X$. The elements of $X^{*}$ are all finite words $x_1x_2\ldots x_n$ over $X$ (including the
empty word). We also use notation $X^{*}$ for the tree with the vertex set $X^{*}$ and edges $(v,vx)$ for all $v\in
X^{*}$ and $x\in X$. The set $X^n$ is the \textit{$n$-th level} of the tree $X^{*}$. The subtree of $X^{*}$ induced by
the set of vertices $\cup_{i=0}^n X^i$ is denoted by $\X[n]$.

Self-similar groups of finite type are defined as special subgroups of the group $\Aut X^{*}$ of all automorphisms of the
tree $X^{*}$. The group $\Aut X^{*}$ is profinite; it is the inverse limit of the sequence
\[
\ldots\rightarrow\Aut\X[3]\rightarrow\Aut\X[2]\rightarrow \Aut X,
\]
where the homomorphisms are given by restriction of the action.


\vspace{0.2cm}\textbf{Sections of automorphisms.} For every automorphism $g\in\Aut X^{*}$ and every word $v\in X^{*}$
define the \textit{section} $g_{(v)}\in \Aut X^{*}$ \textit{of $g$ at $v$} by the rule: $g_{(v)}(x)=y$ for $x,y\in X^{*}$
if and only if $g(vx)=g(v)y$. In other words, the section of $g$ at $v$ is the unique automorphism $g_{(v)}$ of $X^{*}$
such that $g(vx)=g(v)g_{(v)}(x)$ for all $x\in X^{*}$. Sections have the following properties:
\[
g_{(vu)}=g_{(v)(u)},\quad (g^{-1})_{(v)}=(g_{(g^{-1}(v))})^{-1},\quad (gh)_{(v)}=g_{(h(v))}h_{(v)}
\]
for all $v,u\in X^{*}$ (we are using left actions, i.e., $(gh)(v)=g(h(v))$).

A subgroup $G<\Aut X^{*}$ is called \textit{self-similar} if $g_{(v)}\in G$ for every $g\in G$ and $v\in X^{*}$.

The restriction of the action of an automorphism $g\in\Aut X^{*}$ to the subtree $\X[d]$ is denoted by
$g|_{\X[d]}\in\Aut\X[d]$. To every $g\in\Aut X^{*}$ there corresponds a collection $(g_{(v)}|_{\X[d]})_{v\in X^{*}}$ of
automorphisms from $\Aut\X[d]$ which completely describe the action of $g$ on the tree $X^{*}$.



\vspace{0.2cm}\textbf{Self-similar groups of finite type.} A subgroup $\Pa$ of $\Aut\X[d]$ will be
called a \textit{group of patterns of depth} $d$ (or a pattern group of depth $d$), $d\geq 1$. We say
that an automorphism $g\in\Aut X^{*}$ \textit{agrees with} $\Pa$ if every section $g_{(v)}$, $v\in
X^{*}$, acts on $\X[d]$ in the same way as some element in $\Pa$, i.e., $g_{(v)}|_{\X[d]}\in\Pa$ for
all $v\in X^{*}$. Since $\Pa$ is a group, the inverse $g^{-1}$ and all sections $g_{(v)}$ of such an
element $g$ agree with $\Pa$, the product of two elements that agree with $\Pa$ also agrees with $\Pa$.
We obtain the self-similar group $G_{\Pa}$ of all automorphisms $g\in\Aut X^{*}$ that agree with $\Pa$,
i.e., we define the group
\[
G_{\Pa}=\{ g\in\Aut X^{*} : g_{(v)}|_{\X[d]}\in\Pa \mbox{ for every } v\in X^{*}\},
\]
called the \textit{self-similar group of finite type given by the pattern group} $\Pa$. Note that Grigorchuk in
\cite{G:solved} introduced these groups using finite sets of forbidden patterns, while we are using ``allowed'' patterns.


Every group $G_{\Pa}$ is closed in the topology of $\Aut X^{*}$. Indeed, if for an element $g\in \Aut X^{*}$ the
restriction $g|_{\X[n]}$ belongs to $G_{\Pa}|_{\X[n]}$ for every $n\in\mathbb{N}$, then $g_{(v)}|_{\X[d]}\in \Pa$ for
every $v\in X^{*}$ and thus $g\in G_{\Pa}$. Hence $G_{\Pa}$ is a profinite group.

Let us consider a few simple examples. If $\Pa$ is trivial then $G_{\Pa}$ is trivial. If $\Pa=\Aut\X[d]$ then
$G_{\Pa}=\Aut X^{*}$ (for any $d\in\mathbb{N}$). For every subgroup $\Pa<\Sym(X)$ the infinitely iterated permutational
wreath product $\ldots\wr_X\Pa\wr_X\Pa$ is a self-similar group of finite type, where $\Pa$ is the corresponding group of
patterns of depth $1$, and every self-similar group of finite type given by patterns of depth $1$ is of this form.



\vspace{0.2cm}\textbf{Minimal pattern groups.} The same self-similar group of finite type may be given
by different groups of patterns of depth $d$ and we want to choose a unique pattern group in each
class. Let $G$ be a self-similar group of finite type given by a group of patterns of depth $d$ and
consider the pattern group $\Pa=G|_{\X[d]}$. Since the group $G$ is self-similar,
$g_{(v)}|_{\X[d]}\in\Pa$ for every $g\in G$ and $v\in X^{*}$, and thus $G<G_{\Pa}$. On the other hand,
it is clear from the definition that every pattern group of depth $d$ that produces $G$ contains $\Pa$
as a subgroup. Hence $G=G_{\Pa}$ and $\Pa$ is the smallest group of patterns of depth $d$ with this
property. A pattern group $\Pa$ of depth $d$ will be called \textit{minimal} if the equality
$G_{\Pa}=G_{\Qa}$ for $\Qa<\Aut\X[d]$ implies $\Pa<\Qa$. It follows from the above arguments that a
pattern group $\Pa$ of depth $d$ is minimal if and only if $\Pa=G_{\Pa}|_{\X[d]}$, in other words, if
every pattern in $\Pa$ is realized as a restriction of an element of $G_{\Pa}$. Every self-similar
group of finite type given by patterns of depth $d$ is represented by a unique minimal pattern group of
depth $d$.

\vspace{0.2cm}\textbf{Pattern graph.} Let $\Pa$ be a group of patterns of depth $d$. In order to
minimize $\Pa$ one can use a directed labeled graph $\Gamma_{\Pa}$ which we call the \textit{pattern
graph} associated to $\Pa$. The vertices of $\Gamma_{\Pa}$ are the elements of $\Pa$ and for
$a,b\in\Pa$ and $x\in X$ we put a labeled arrow $a\stackrel{x}{\rightarrow}b$ whenever
$a_{(x)}|_{\X[d-1]}=b|_{\X[d-1]}$. Informally, the arrow $a\stackrel{x}{\rightarrow}b$ shows that we
can use the pattern $b$ to extend the action of $a$ on the subtree $x\X[d]$ (see
Fig.~\ref{fig_PatternGraph}). If a vertex $a\in\Pa$ does not have an outgoing edge labeled by $x$ for
some letter $x\in X$, then the action of $a$ cannot be extended to the next level using patterns from
$\Pa$; in other words $a$ is not a restriction of an element of $G_{\Pa}$. Now it is clear how to
minimize $\Pa$: we just remove every vertex that does not have an outgoing edge labeled by $x$ for some
$x\in X$ and repeat this reduction as long as possible. The remaining patterns will form a minimal
pattern group for $G_{\Pa}$. In particular, $\Pa$ is minimal if and only if every vertex of $\gr_{\Pa}$
has an outgoing edge labeled by $x$ for every $x\in X$.

\begin{figure}[ph]
\begin{center}
\psfrag{x}{$x$} \psfrag{b}{$b$} \psfrag{a}{$a$} \psfrag{GP}{in $\Gamma_{\Pa}$}
\epsfig{file=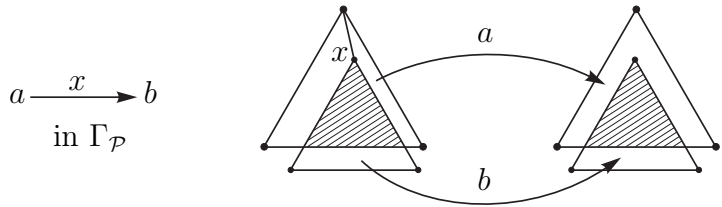,width=270pt}\caption{Coordination between patterns.}
\label{fig_PatternGraph}
\end{center}
\end{figure}

The graph $\Gamma_{\Pa}$ can be used to represent elements of the group $G_{\Pa}$ by graph homomorphisms as follows. Let
us take the tree $X^{*}$ and add direction and label to every edge by $v\stackrel{x}{\rightarrow}vx$ for every $v\in
X^{*}$ and $x\in X$. Then every element $g\in G_{\Pa}$ defines a homomorphism $\phi:X^{*}\rightarrow \Gamma_{\Pa}$ of
labeled directed graphs by the rule $\phi(v)=g_{(v)}|_{\X[d]}$. Indeed, for every arrow $v\stackrel{x}{\rightarrow}vx$ in
the tree $X^{*}$ the elements $g_{(v)(x)}$ and $g_{(vx)}$ are the same and we have the arrow
$\phi(v)\stackrel{x}{\rightarrow}\phi(vx)$ in the graph $\gr_{\Pa}$. And vise versa, every homomorphism
$\phi:X^{*}\rightarrow \Gamma_{\Pa}$ defines an element $g\in G_{\Pa}$ by its restrictions $g_{(v)}|_{\X[d]}=\phi(v)$.
This description is an analog of a standard statement in symbolic dynamics that every shift of finite type is sofic (see
\cite[Theorem~3.1.5]{symb_dyn}), and pattern graphs play a role of recognition graphs. One can use this observation to
introduce the notion of a self-similar group of sofic type which we will discuss elsewhere.

\vspace{0.2cm}\textbf{Branching properties.} Let us explain the connection between self-similar groups of finite type and
branch groups mentioned in Introduction.

Let $G$ be a subgroup of $\Aut X^{*}$. The \textit{vertex stabilizer} $\St_G(v)$ of a vertex $v\in X^{*}$ is the subgroup
of all $g\in G$ such that $g(v)=v$. The \textit{$n$-th level stabilizer} $\St_G(n)$ is the subgroup of all $g\in G$ such
that $g(v)=v$ for every $v\in X^n$. Notice that $\St_G(v)$ and $\St_G(n)$ have finite index in $G$. The \textit{rigid
vertex stabilizer} $\RiSt_G(v)$ of a vertex $v\in X^{*}$ is the subgroup of all $g\in G$ such that $g(u)=u$ for every
vertex $u\in X^{*}\setminus vX^{*}$. The set of all sections $g_{(v)}$ for $g\in\RiSt_G(v)$ forms a group which we call
the \textit{section group} of $\RiSt_G(v)$ at the vertex $v$. The group $G$ is called \textit{level-transitive} if it
acts transitively on all levels $X^n$ of the tree. The group $G$ is called \textit{regular branch} branching over its
subgroup $K$ if $G$ is level-transitive, $K$ is a normal subgroup of finite index, and the group of all automorphism
$g\in\St_{\Aut X^{*}}(1)$ such that the tuple $(g_{(x)})_{x\in X}$ belongs to $\prod_X K$ is a subgroup of finite index
in $K$. Note that the last condition implies that the section group of $\RiSt_K(v)$ at $v$ contains $K$ for every vertex
$v\in X^{*}$.

Every level-transitive self-similar group $G_{\Pa}$ of finite type given by patterns of depth $d$ is
regular branch over its level stabilizer $\St_{G_{\Pa}}(d-1)$ (see \cite[Proposition~7.15]{G:solved}).
Indeed, notice that for every element $h\in\St_{G_\Pa}(d-1)$ and any vertex $v\in X^{*}$ the unique
automorphism $g\in\RiSt_{\Aut X^{*}}(v)$ such that $g_{(v)}=h$ agrees with the pattern group $\Pa$ and
hence belongs to $G_{\Pa}$. It follows that $\St_{G_{\Pa}}(n)$ for $n\geq d$ decomposes into the direct
product
\[
\St_{G_{\Pa}}(n)\cong \St_{G_{\Pa}}(d-1)\times \ldots \times \St_{G_{\Pa}}(d-1)
\]
of $|X|^{n-d+1}$ copies of $\St_{G_{\Pa}}(d-1)$, where each factor acts on the corresponding subtree $vX^{*}$ for $v\in
X^{n-d+1}$. The last condition in the definition of a regular branch group follows. Conversely, if $G$ is a self-similar
regular branch group branching over its level stabilizer $\St_G(d-1)$ then the closure of $G$ in $\Aut X^{*}$ is a
self-similar group of finite type given by patterns of depth $d$ (see \cite[Theorem~3]{S:Hausd}).

\vspace{0.2cm}\textbf{Triviality, finiteness, and level-transitivity of $G_{\Pa}$.} Given a pattern
group $\Pa$ we want to understand whether the group $G_{\Pa}$ is trivial, finite, or acts transitively
on the levels of the tree $X^{*}$. The answer to the question about triviality of $G_{\Pa}$ directly
follows from the definition of a minimal pattern group. Namely, the group $G_{\Pa}$ is trivial if and
only if minimizing $\Pa$ we obtain the trivial group.

The finiteness of $G_{\Pa}$ can be effectively checked using the next statement.

\begin{proposition}\label{prop_finite}
Let $\Pa$ be a minimal pattern group of depth $d$. The group $G_{\Pa}$ is finite if and only if the stabilizer
$\St_{\Pa}(d-1)$ is trivial, and in this case $G_{\Pa}$ is isomorphic to $\Pa$.
\end{proposition}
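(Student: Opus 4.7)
\medskip\noindent\textbf{Proof plan.} The plan is to analyze the restriction homomorphism $\rho\colon G_\Pa\to \Pa$ defined by $\rho(g)=g|_{\X[d]}$, which is surjective by minimality of $\Pa$ (explicitly $\Pa=G_\Pa|_{\X[d]}$). Both implications will reduce to controlling $\ker\rho$, and the second claim of the proposition will become the statement that $\rho$ is an isomorphism.

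For the ``if'' direction, assume $\St_\Pa(d-1)=\{e\}$; I would prove $\rho$ is injective. If $g\in\ker\rho$, then $g$ fixes $\X[d]$ pointwise. For any $v\in X$ and any word $w$ of length at most $d-1$ the identity $v\,g_{(v)}(w)=g(vw)=vw$ forces $g_{(v)}$ to stabilize the first $d-1$ levels, so $g_{(v)}|_{\X[d]}\in\St_\Pa(d-1)=\{e\}$. In particular $g_{(v)}\in\ker\rho$ again, so iterating down the tree shows $g_{(v)}|_{\X[d]}=e$ for every $v\in X^{*}$, which forces $g=e$. Hence $G_\Pa\cong\Pa$ is finite.

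For the converse, suppose $\St_\Pa(d-1)$ contains a non-trivial element $h$. Minimality lets me lift $h$ to $\tilde h\in G_\Pa$ with $\tilde h|_{\X[d]}=h$; since $h$ stabilizes levels $0,\dots,d-1$, so does $\tilde h$, giving $\tilde h\in\St_{G_\Pa}(d-1)$. I would then invoke the branching observation recalled just before the proposition: for each $v\in X^{*}$ the unique automorphism $g_v\in\RiSt_{\Aut X^{*}}(v)$ with $(g_v)_{(v)}=\tilde h$ agrees with $\Pa$, and so $g_v\in G_\Pa$. Since $g_{v_1}$ and $g_{v_2}$ are non-trivial with disjoint supports whenever $v_1\neq v_2$ lie on the same level, the family $\{g_v:v\in X^{*}\}$ is infinite and therefore so is $G_\Pa$.

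The only step requiring a small check --- and the place I expect to be the main, if modest, obstacle --- is confirming that each $g_v$ really lies in $G_\Pa$, i.e.\ that $(g_v)_{(w)}|_{\X[d]}\in\Pa$ at every vertex $w$. The delicate case is when $w$ is a proper ancestor of $v$, say $v=wu$ with $1\le|u|<d$: then $(g_v)_{(w)}$ is the identity outside the subtree rooted at $u$ and coincides with a truncation of $\tilde h$ on that subtree, and since $\tilde h$ fixes levels $0,\dots,d-1$ and $d-|u|\le d-1$, the restriction $(g_v)_{(w)}|_{\X[d]}$ is forced to be trivial. The remaining cases --- $w$ incomparable to $v$, equal to $v$, or strictly below $v$ --- follow directly from $\tilde h\in G_\Pa$.
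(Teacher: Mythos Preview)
Your proof is correct but follows a genuinely different route from the paper's. The paper argues by counting via the pattern graph $\Gamma_\Pa$: setting $m=|\St_\Pa(d-1)|$, it shows that every vertex of $\Gamma_\Pa$ has exactly $m$ outgoing edges with each label $x\in X$, hence every element of $G_\Pa|_{\X[n]}$ has exactly $m^{|X|^{n-d+1}}$ lifts to $G_\Pa|_{\X[n+1]}$, and one obtains the closed formula $|G_\Pa|_{\X[n]}|=|\Pa|\cdot m^{|X|+|X|^2+\cdots+|X|^{n-d}}$ for $n>d$; both directions and the isomorphism $G_\Pa\cong\Pa$ drop out of this single computation. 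Your argument is instead structural: for the ``if'' direction you show directly that the restriction $\rho$ is injective by chasing sections down the tree, and for the converse you exhibit infinitely many distinct elements $g_v$ via the rigid-stabilizer construction (correctly noting that the branching fact quoted in the paper does not actually need level-transitivity, and verifying the ancestor case by hand). Your approach is cleaner and more self-contained for the bare equivalence, while the paper's counting buys additional information that gets used later --- the exact order of each level restriction, and the graph-theoretic reformulation (finiteness $\Leftrightarrow$ every vertex of $\Gamma_\Pa$ has a unique outgoing $x$-edge) stated immediately after the proof.
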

\begin{proof}
Let $\gr_{\Pa}$ be the pattern graph of $\Pa$ and put $m=|\St_{\Pa}(d-1)|$. Notice that $(bc)|_{\X[d-1]}=b|_{\X[d-1]}$
for every $b\in\Pa$ and $c\in\St_{\Pa}(d-1)$. Hence if $a\stackrel{x}{\rightarrow}b$ is an arrow in $\gr_{\Pa}$ then
$a\stackrel{x}{\rightarrow}bc$ is also an arrow in $\gr_{\Pa}$ for every $c\in\St_{\Pa}(d-1)$, and every outgoing arrow
at $a$ with label $x$ is of this form. Therefore, since $\Pa$ is minimal, every vertex of $\gr_{\Pa}$ has precisely $m$
outgoing edges labeled by $x$ for every $x\in X$. It follows that for every $a\in\Pa$ there are precisely $m^{|X|}$
elements $g\in G_{\Pa}|_{\X[d+1]}$ such that $g|_{\X[d]}=a$; in other words, every pattern in $\Pa$ has $m^{|X|}$
extensions to the next level. Then for each level $n>d$ and for every $f\in G_{\Pa}|_{\X[n]}$ there are precisely
$m^{|X|^{n-d+1}}$ elements $g\in G_{\Pa}|_{\X[n+1]}$ such that $g|_{\X[n]}=f$. Now we can compute the total number of
elements in the restriction $G_{\Pa}|_{\X[n]}$:
\[
|G_{\Pa}|_{\X[n]}|=|\Pa|\cdot m^{|X|+|X|^2+\ldots+|X|^{n-d}}, \mbox{ for } n>d.
\]
Therefore the group $G_{\Pa}$ is finite if and only if $m=1$, i.e., when the group $\St_{\Pa}(d-1)$ is trivial. In this
case, $|G_{\Pa}|=|\Pa|$ and the restriction $g\mapsto g|_{\X[d]}$ is an isomorphism between $G_{\Pa}$ and $\Pa$.
\end{proof}

It follows from the proof that we can also use the pattern graph $\Gamma_{\Pa}$ to check the finiteness of $G_{\Pa}$. If
$\Pa$ is minimal, then the group $G_{\Pa}$ is finite if and only if some (equivalently, every) vertex of $\Gamma_{\Pa}$
has only one outgoing edge labeled by $x$ for each $x\in X$.

Let us treat transitivity on levels. We will use the standard observation that a subgroup $G<\Aut X^{*}$ acts
transitively on $X^{n+1}$ if and only if it acts transitively on $X^{n}$ and the stabilizer $\St_G(v)$ of some (every)
vertex $v\in X^n$ acts transitively on~$vX$.

Let $\Pa$ be a minimal pattern group of depth $d$ and consider the self-similar group of finite type $G_{\Pa}$. We fix a
letter $x\in X$ and use notation $x^n$ for the word $x\ldots x$ ($n$ times).  Let $\Pa_n$ be the group of all elements
$a\in\Pa$ for which there exists $g\in\St_{G_{\Pa}}(x^n)$ such that $g_{(x^n)}|_{\X[d]}=a$. Then $\St_{G_{\Pa}}(x^n)$ is
transitive on $x^nX$ if and only if $\Pa_n$ is transitive on $X$. It follows that $G_{\Pa}$ is level-transitive if and
only if each group $\Pa_n$ for $n\geq 0$ is transitive on $X$. Notice that the groups $\Pa_n$ can be computed recursively
by the rule: $\Pa_0=\Pa$ and
\[
\Pa_{n+1}=\left\{a\in\Pa_n : \mbox{ there exists } b\in\St_{\Pa_n}(x) \mbox{ such that }
b_{(x)}|_{\X[d-1]}=a|_{\X[d-1]}\right\}.
\]
We obtain a decreasing sequence $\Pa>\Pa_1>\ldots$ of finite groups which should stabilize on some
subgroup $\Qa<\Pa$, $\Qa=\cap_{n\geq 0} \Pa_n$. Moreover, if we take the smallest $n$ such that
$\Pa_n=\Pa_{n+1}$ then $\Pa_n=\Pa_{n+k}$ for every $k\in\mathbb{N}$, and thus $\Qa=\Pa_n$. Hence the
group $\Qa$ can be algorithmically computed. We have proved the following effective criterium.

\begin{proposition}\label{prop_transitive}
The group $G_{\Pa}$ is level-transitive if and only if the group $\Qa$ is transitive on $X$.
\end{proposition}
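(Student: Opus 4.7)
The plan is to derive the proposition as a short consequence of two facts already established in the paragraph preceding the proposition: (i) $G_{\Pa}$ is level-transitive if and only if each $\Pa_n$, $n\geq 0$, acts transitively on $X$; and (ii) the descending chain $\Pa \geq \Pa_1 \geq \Pa_2 \geq \cdots$ of subgroups of the finite group $\Pa$ stabilizes, with $\Qa = \Pa_N$ for the smallest index $N$ such that $\Pa_N = \Pa_{N+1}$.

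Granting (i) and (ii), I would argue both directions of the equivalence as follows. Since $\Qa \leq \Pa_n$ for every $n$, any $\Pa_n$-orbit on $X$ contains a $\Qa$-orbit, so transitivity of $\Qa$ on $X$ forces each $\Pa_n$ to be transitive on $X$, and then (i) gives that $G_{\Pa}$ is level-transitive. Conversely, if $G_{\Pa}$ is level-transitive, then (i) yields transitivity of every $\Pa_n$ on $X$, and in particular of $\Pa_N$; combined with $\Qa = \Pa_N$ from (ii), this gives transitivity of $\Qa$ on $X$.

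The only genuinely delicate point is (i), but it has already been sketched in the text: the standard criterion that a subgroup of $\Aut X^{*}$ acts transitively on $X^{n+1}$ if and only if it acts transitively on $X^n$ and the stabilizer of some vertex $v \in X^n$ acts transitively on $vX$ reduces the problem (applied iteratively along the vertices $x^n$, $n \geq 0$) to showing that $\St_{G_{\Pa}}(x^n)$ acts transitively on $x^n X$ if and only if $\Pa_n$ acts transitively on $X$, which is immediate from the definition of $\Pa_n$ as the set of patterns realized by the sections $g_{(x^n)}|_{\X[d]}$ of elements $g \in \St_{G_{\Pa}}(x^n)$. Thus the main obstacle, if any, is the bookkeeping underlying the recursive definition of $\Pa_{n+1}$ in terms of $\Pa_n$, and no new idea beyond the preceding paragraph is required; the proposition itself is a one- or two-sentence corollary.
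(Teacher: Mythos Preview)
Your proposal is correct and matches the paper's approach exactly: the paper's proof of this proposition is precisely the paragraph preceding its statement (ending with ``We have proved the following effective criterium''), and you have accurately identified and reproduced that argument, including the reduction via the standard stabilizer criterion along the ray $x^n$ and the stabilization of the descending chain $\Pa_0\geq\Pa_1\geq\cdots$ at $\Qa$.
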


\section{Finite generation of groups $G_{\Pa}$}

In this section we study when the group $G_{\Pa}$ is topologically finitely generated. Further we omit the word
``topologically''.

\begin{theorem}\label{thm_fin_gen}
Let $G$ be a level-transitive self-similar group of finite type given by patterns of depth $d$. The
group $G$ is finitely generated if and only if there exists $n\geq d$ such that the commutator of
$\St_{G}(d-1)|_{\X[n]}$ contains $\St_{G}(n-1)|_{\X[n]}$.
\end{theorem}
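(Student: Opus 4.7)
Since $K := \St_G(d-1)$ has finite index in $G$, topological finite generation of $G$ is equivalent to that of $K = \varprojlim K_n$, where $K_n := K|_{\X[n]} = K/\St_G(n)$. Set $L_n := \St_G(n-1)|_{\X[n]} = \St_G(n-1)/\St_G(n)$, so there are short exact sequences $1 \to L_n \to K_n \to K_{n-1} \to 1$ for $n \geq d$. The theorem's condition $L_n \subseteq [K_n, K_n]$ is equivalent to the natural surjection $K_n^{\mathrm{ab}} \twoheadrightarrow K_{n-1}^{\mathrm{ab}}$ being an isomorphism.

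The key structural ingredient is the branching identification: for $m \geq d+1$, the regular branching $\St_G(m) \cong K^{|X|^{m-d+1}}$, restricted to $\X[m]$, identifies $\St_{K_m}(d) \subset K_m$ with $K_{m-1}^{|X|}$ in such a way that the subgroup $L_m$ corresponds to $L_{m-1}^{|X|}$. Since commutators in a direct product are computed coordinatewise, $L_{m-1} \subseteq [K_{m-1}, K_{m-1}]$ implies $L_m \cong L_{m-1}^{|X|} \subseteq [K_{m-1}^{|X|}, K_{m-1}^{|X|}] \subseteq [K_m, K_m]$. By induction, the condition at one level $n \geq d$ propagates to every $m \geq n$, so the tower $\{K_m^{\mathrm{ab}}\}_{m \geq n}$ stabilizes and the profinite abelianization $K^{\mathrm{ab}}$ is finite.

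For the sufficiency direction, the plan is to show that under the condition the generator counts $d(K_m)$ stay uniformly bounded; a standard profinite compactness argument (choosing tuples in $K^r$ whose projections generate each $K_m$ and intersecting the resulting nested closed nonempty family) will then yield topological finite generation of $K$. Starting with a generating set of $K_n$, one would apply Gasch\"utz's lemma inductively at each step $m \to m+1$, using the identification $L_{m+1} \cong L_m^{|X|}$ together with the level-transitive action of $K_{m+1}$ permuting the $|X|$ branching coordinates: commutators witnessing $L_m \subseteq [K_m, K_m]$ in a single coordinate's copy of $K_m$ are transported across all branches by conjugation, realizing all of $L_{m+1}$ inside the group generated by the lifted generators.

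The main obstacle is precisely this sufficiency step: the relation $L_m \subseteq [K_m, K_m]$ is a statement in the ambient group $K_m$, and transferring it into containment of $L_m$ inside the subgroup generated by chosen lifts requires delicate use of both the branching decomposition $L_m = L_{m-1}^{|X|}$ and the level-transitive permutation action on the branching coordinates. For the converse (necessity), topological finite generation of $K$ combined with the Schreier-type bound $d(\St_G(m)) \leq |K_m|(d(K)-1) + 1$ on the open subgroup $\St_G(m) \cong K^{|X|^{m-d+1}}$ and the direct-product lower bound $d(\St_G(m)^{\mathrm{ab},p}) = |X|^{m-d+1} \cdot d(K^{\mathrm{ab},p})$ forces $K^{\mathrm{ab}}$ to be finite, whence the tower $\{K_m^{\mathrm{ab}}\}$ stabilizes and the condition holds for some $n$.
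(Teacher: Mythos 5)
Your reformulation of the condition as ``the surjection $K_n^{\mathrm{ab}}\to K_{n-1}^{\mathrm{ab}}$ is an isomorphism'' and your propagation lemma (via $\St_{K_m}(d)\cong K_{m-1}^{|X|}$ carrying $L_m$ to $L_{m-1}^{|X|}$) are correct and genuinely useful. But the direction ``condition $\Rightarrow$ finitely generated'' is not proved: you state yourself that it is the main obstacle and offer only a plan. The plan as written has two concrete problems. First, Gasch\"utz-type lifting needs $L_{m+1}$ to lie in the Frattini subgroup of $K_{m+1}$ (or an a priori bound $d(K_{m+1})\le r$, which is what you are trying to prove); $L_{m+1}\subseteq[K_{m+1},K_{m+1}]$ gives neither, so lifted generators of $K_m$ need not generate $K_{m+1}$ without further work. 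Second, $K_{m+1}$ does \emph{not} permute the branching coordinates: $K=\St_G(d-1)$ fixes the first $d-1$ levels pointwise, so the transitive permutation of the level-one coordinates must be borrowed from $G$ itself. Exactly this missing step is where the paper's proof does its real work: it builds an explicit finitely generated dense subgroup using spinal-type generators $g_i$ (with section $h_i$ at $yv$ and $g_i$ at $x$), a conjugator with section $a\in\St_G(d-1)$ along the ray $x^iy$, commutators that land in rigid vertex stabilizers, and level-transitivity to spread the resulting sections (which exhaust $\St_G(k-1)|_{\X[k]}$ because they contain the commutator of $\St_G(d-1)|_{\X[k]}$) over a whole level. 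That level-transitivity is indispensable is shown by the $A_5$ example in the paper's remark, which satisfies your commutator condition but is not finitely generated; any completed argument must use transitivity in an essential way, and your sketch does not yet do so correctly.

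The converse (``finitely generated $\Rightarrow$ condition'') as you argue it is quantitatively wrong. By Proposition~\ref{prop_finite}, once $G$ is infinite the order $|K_m|=[K:\St_G(m)]$ grows like $c^{|X|^{m-d}}$, so the Schreier upper bound $d(\St_G(m))\le |K_m|(d(K)-1)+1$ grows far faster than your lower bound $|X|^{m-d+1}\,d(K^{\mathrm{ab},p})$; the two estimates never conflict, and finiteness of $K^{\mathrm{ab}}$ does not follow. A sanity check: if such a comparison did force anything, it would force $d_p(K^{\mathrm{ab}})=0$ for every $p$, i.e.\ $K^{\mathrm{ab}}=1$, which is false for, e.g., the closure of the Grigorchuk group. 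The paper's necessity argument is different and does work: assuming the condition fails at every level $n$, it picks in each $S_n/[S_n,S_n]$ a nontrivial cyclic subgroup $A_n$ of order at most $|\Pa|$ generated by the image of an element of $\St_G(n-1)|_{\X[n]}$ (which restricts trivially to all lower levels), and shows $S=\St_G(d-1)$ surjects onto $\prod_{n=d}^m A_n$ for every $m$; since the $A_n$ are nontrivial of bounded order, the minimal number of generators of these quotients is unbounded, so $S$, and hence $G$, is not finitely generated. You would need to replace your Schreier-versus-product comparison by an argument of this kind.
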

\begin{proof}
Let $G=G_\Pa$ for a minimal pattern group $\Pa$ of depth $d$.

First we prove the necessity. The proof will not use transitivity on levels. Assume that the commutator
of $\St_{G}(d-1)|_{\X[n]}$ does not contain $\St_{G}(n-1)|_{\X[n]}$ for every $n\geq d$. Let us prove
that $\St_{G}(d-1)$ and thus $G$ are not finitely generated. In the proof we will use notations
$S=\St_{G}(d-1)$ and $S_n=S|_{\X[n]}$. For each $m\geq d$ consider the homomorphism
\[
\varphi:S\rightarrow \prod_{n=d}^m S_n/[S_n,S_n], \ \varphi(g)=(g|_{\X[n]} [S_n,S_n])_{n=d}^m.
\]
Recall that the stabilizer $\St_{G}(n-1)|_{\X[n]}$ decomposes into the direct product
$\St_{\Pa}(d-1)\times\ldots\times\St_{\Pa}(d-1)$ of $|X|^{n-d}$ copies of $\St_{\Pa}(d-1)$. By our
assumption there exists an element $g_n=(1,\ldots,a_n,\ldots,1)$, $a_n\in\St_{\Pa}(d-1)$, of this
product that does not belong to the commutator $[S_n,S_n]$. Let $A_n$ be the group generated by the
image of $g_n$ in the quotient of $\St_{G}(n-1)|_{\X[n]}$ by $[S_n,S_n]$. The group $A_n$ is a
nontrivial subgroup of the finite abelian group $S_n/[S_n,S_n]$. Hence $A_n$ is also a quotient of
$S_n/[S_n,S_n]$. Composing with $\varphi$ we obtain a homomorphism from $S$ to $\prod_{n=d}^m A_n$.
Moreover, for $i<n$ the $i$-th component of the image of $g_n$ in this direct product is trivial. It
follows that $\prod_{n=d}^m A_n$ is a homomorphic image of $S$. Since $|A_n|\leq |\Pa|$ for all $n$,
the number of generators of $\prod_{n=d}^m A_n$ goes to infinity as $m$ goes to infinity. Hence $S$ is
not finitely generated.

Let us prove the converse. Fix $k\geq d$ such that the commutator of $\St_{G}(d-1)|_{\X[k]}$ contains
$\St_{G}(k-1)|_{\X[k]}$. We construct a finitely generated dense subgroup of $G$ using the techniques
from branch groups (see \cite{BGS:branch,B:fgwreath}). Let $f_1,\ldots,f_l$ and $h_1,\ldots,h_m$ be the
elements of $G$ such that
\[
\langle f_1,\ldots,f_l\rangle |_{\X[1+d+k]}=G|_{\X[1+d+k]}\ \mbox{ and } \ \langle
h_1,\ldots,h_m\rangle|_{\X[k]}=\St_G(d-1)|_{\X[k]}.
\]
The group $\St_{\Pa}(d-1)$ is nontrivial by Proposition~\ref{prop_finite}, and we can find $v\in X^d$ and
$a\in\St_G(d-1)$ such that $a(v)\neq v$ (the element $a$ will be used to shift the section of certain automorphisms at
the vertex $v$). Fix two letter $x,y\in X$, $x\neq y$. Define the automorphisms $g_1,\ldots,g_m$ recursively by their
sections:
\[
{g_i}_{(yv)}=h_i\ \mbox{ and }\ {g_i}_{(x)}=g_i, \ i=1,\ldots,m,
\]
and the other sections are trivial (see Fig.~\ref{fig_Automorphisms}). Notice that $g_1,\ldots,g_m$ belong to $G$.

\begin{figure}
\psfrag{gi}{$g_i$} \psfrag{hi}{$h_i$} \psfrag{x}{$x$} \psfrag{y}{$y$} \psfrag{yv}{$yv$} \psfrag{xy}{$xy$}
\psfrag{x2}{$x^2$} \psfrag{x2y}{$x^2y$} \psfrag{x2yv}{$x^2yv$} \psfrag{xyv}{$xyv$} \psfrag{g}{$g$} \psfrag{a}{$a$}
\centerline{\epsfig{file=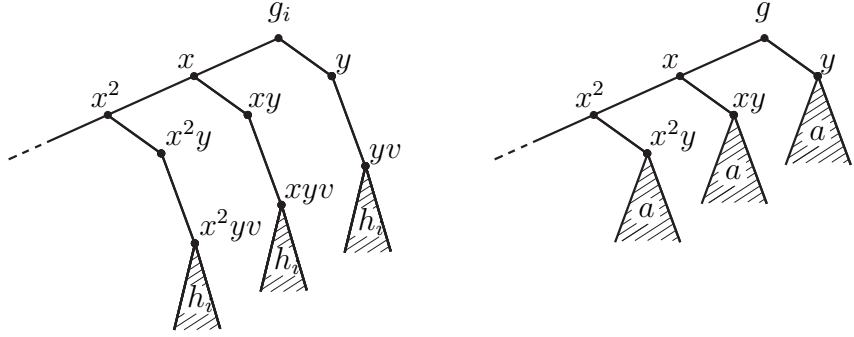,width=320pt}} \vspace*{8pt} \caption{The construction of generator $g_i$ and
conjugator $g$.}\label{fig_Automorphisms}
\end{figure}

Consider the group $H=\langle f_1,\ldots,f_l, h_1,\ldots,h_m, g_1,\ldots,g_m\rangle$ and let us show that $H$ is dense in
$G$. We need to prove that $H|_{\X[n]}=G|_{\X[n]}$ for all $n\in\mathbb{N}$. The statement holds for $n\leq 1+d+k$ by
construction. By induction on $n$ assume that we have proved it for all levels $\leq n+d+k$. There exists an element
$g\in G$ such that
\[
g_{(x^iy)}=a \mbox{ for } i=0,\ldots,n-1
\]
and the other sections are trivial (see Fig.~\ref{fig_Automorphisms}). By inductive hypothesis there exists $h\in H$ such
that $h|_{\X[n+d+k]}=g|_{\X[n+d+k]}$. Then the commutator $[h^{-1}g_ih,g_j]$ acts trivially on the vertices in
$\X[n+d+k+1]\setminus x^nyv\X[k]$ and at the vertex $x^nyv$ has section
\[
[h^{-1}g_ih,g_j]_{(x^nyv)}=[h^{-1}_{(x^nyv)}h_ih_{(x^nyv)},h_j].
\]
Conjugating by generators $g_1,\ldots,g_m$ we obtain that the section group of
$\RiSt_H(x^nyv)|_{\X[n+d+k+1]}$ at $x^nyv$ contains the commutator of $\St_G(d-1)|_{\X[k]}$ and hence
$\St_G(k-1)|_{\X[k]}$. Since the action is transitive this holds for every vertex of the level
$X^{n+d+1}$. Hence $\St_H(n+d+k)|_{\X[n+d+k+1]}=\St_G(n+d+k)|_{\X[n+d+k+1]}$ and the statement follows.

\end{proof}

\begin{remark}
An automorphism of the tree $X^{*}$ is called \textit{finite-state} if it has finitely many sections
(the term comes from automata theory); a subgroup is finite-state if it consists of finite-state
automorphisms. We can always choose elements $f_1,\ldots,f_l$ and $h_1,\ldots,h_m$ so that they are
finite-state. Then the elements $g_1,\ldots,g_m$ and the group $H$ constructed in the proof will be
also finite-state. Adding sections of elements we obtain a finitely generated finite-state self-similar
dense subgroup in $G$.
\end{remark}

\begin{remark}
The condition of level-transitivity cannot be dropped in Theorem~\ref{thm_fin_gen}. For example, consider the alternating
group $A_5$ with the natural action on $\{1,2,3,4,5\}$, extend the action to the alphabet $X=\{0,1,2,3,4,5\}$ by putting
$\pi(0)=0$ for every $\pi\in A_5$, and consider the infinitely iterated permutational wreath product $G_{A_5}=\ldots
\wr_X A_5\wr_X A_5$. The group $A_5$ is perfect, i.e., $[A_5,A_5]=A_5$, hence the condition in Theorem~\ref{thm_fin_gen}
holds for $n=d=1$. However the group $G$ is not finitely generated, because the map $g\mapsto
(g_{(0^n)}|_{X})_{n\in\mathbb{N}}$ is a surjective homomorphism from $G$ to the product $\prod_{\mathbb{N}} A_5$ which is
not finitely generated.
\end{remark}

\begin{remark}
It is not difficult to see that for a group $G_{\Pa}$ given by a transitive pattern group $\Pa$ of depth $1$ the
condition in the theorem holds for some $n$ if and only if the group $\Pa$ is perfect. Hence Theorem~\ref{thm_fin_gen}
generalizes Corollary~3.6 in \cite{B:fgwreath} about finite generation of iterated permutational wreath products
$\ldots\wr_X\Pa\wr_X\Pa$.
\end{remark}

\begin{proposition}\label{prop_not_fin_gen}
Let $G$ be a self-similar group of finite type given by patterns of depth $d$. If there exists $n\geq
d$ such that the commutator of $G|_{\X[n]}$ does not contain $\St_{G}(n-1)|_{\X[n]}$ then the group $G$
is not finitely generated.
\end{proposition}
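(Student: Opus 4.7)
The plan is to produce, for every $m \geq 0$, a continuous surjection $G \twoheadrightarrow (\mathbb{Z}/p\mathbb{Z})^{m+1}$ for a single prime $p$; since the minimum number of topological generators of a profinite group is at least that of any finite quotient, this forces $G$ not to be topologically finitely generated.

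I would begin by extracting data from the hypothesis. Put $A := G|_{\X[n_0]}/[G|_{\X[n_0]},G|_{\X[n_0]}]$ with quotient map $\pi$; pick $s \in \St_G(n_0-1)|_{\X[n_0]}$ with $\bar s := \pi(s) \neq 0$; choose a prime $p$ and a character $\chi \colon A \to \mathbb{Z}/p\mathbb{Z}$ with $\chi(\bar s) \neq 0$ (such $\chi$ exists since $A$ is finite abelian and $\bar s$ is nonzero). Lift $s$ to $\tilde s \in \St_G(n_0-1) \subseteq \St_G(d-1)$. The planting argument from the regular branching discussion then yields $\tilde s^{[v]} \in G$ for every $v \in X^*$, where $\tilde s^{[v]}$ denotes the element trivial off $vX^*$ with section $\tilde s$ at $v$. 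The workhorse is the family of maps
\[
\sigma_k \colon G \to A, \qquad \sigma_k(g) := \sum_{v \in X^k}\pi\bigl(g_{(v)}|_{\X[n_0]}\bigr),\qquad k \geq 0,
\]
each a homomorphism thanks to the identity $(gh)_{(v)} = g_{(h(v))}h_{(v)}$, the abelianness of $A$, and the fact that $h$ permutes $X^k$.

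For every $\ell \geq n_0$ pick any $v_\ell \in X^\ell$ and set $g_\ell := \tilde s^{[v_\ell]} \in G$. The key computation is: for $k < \ell$ the only section of $g_\ell$ at a level-$k$ vertex that is not trivial sits at the prefix $v_\ell|_k$ and equals a planting $\tilde s^{[u]}$ with $|u| = \ell - k \geq 1$; because $\tilde s$ fixes $X^{n_0-1}$ pointwise, every such planting restricts trivially to $\X[n_0]$, so $\sigma_k(g_\ell) = 0$. For $k = \ell$ only the section at $v_\ell$ contributes, giving $\sigma_\ell(g_\ell) = \bar s$. Assembling
\[
\Phi_m := (\chi\sigma_{n_0}, \chi\sigma_{n_0+1}, \ldots, \chi\sigma_{n_0+m}) \colon G \to (\mathbb{Z}/p\mathbb{Z})^{m+1},
\]
the images $\Phi_m(g_{n_0+j})$ for $j = 0,\ldots, m$ form an upper triangular system with nonzero diagonal entry $\chi(\bar s)$ and are therefore linearly independent over $\mathbb{F}_p$, so $\Phi_m$ is surjective. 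This yields finite quotients of $G$ of arbitrarily large rank.

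The one delicate step, and the only place I expect real care, is the vanishing $\sigma_k(g_\ell) = 0$ for $k < \ell$; everything else is formal bookkeeping. That computation reduces to the observation that any element of $\St_G(n_0-1)$ planted at a word of length $\geq 1$ becomes invisible at depth $n_0$, and it is precisely this observation that allows a single local obstruction at depth $n_0$ to be amplified into an infinite sequence of independent characters of $G$.
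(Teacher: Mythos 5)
Your construction is, in substance, the paper's own proof: your $\sigma_k$ are exactly the paper's homomorphisms $\varphi_k(g)=\prod_{v\in X^k}g_{(v)}|_{\X[n_0]}\,[G_{n_0},G_{n_0}]$, your planted elements $\tilde s^{[v_\ell]}$ are the paper's rigid-stabilizer witnesses, and the step you flag as delicate --- $\sigma_k(g_\ell)=0$ for $k<\ell$ because an element of $\St_G(n_0-1)$ planted at a word of length $\geq 1$ restricts trivially to $\X[n_0]$ --- is verified correctly. The genuine gap is the very first reduction: a prime $p$ and a homomorphism $\chi\colon A\to\mathbb{Z}/p\mathbb{Z}$ with $\chi(\bar s)\neq 0$ need not exist. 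Every homomorphism $A\to\mathbb{Z}/p\mathbb{Z}$ kills $pA$, so such a $\chi$ exists only when $\bar s\notin pA$ for some prime $p$, i.e.\ only when $\bar s$ avoids the Frattini subgroup $\bigcap_p pA$; the hypothesis gives merely $\bar s\neq 0$. Concretely, if $A\cong\mathbb{Z}/4\mathbb{Z}$ and the image of $\St_G(n_0-1)|_{\X[n_0]}$ in $A$ is $\{0,2\}$, then every character to every $\mathbb{Z}/p\mathbb{Z}$ vanishes on it, and no other choice of $s$ helps; nothing in the hypotheses excludes this. Your surjectivity of $\Phi_m$ genuinely uses the field structure (triangular with nonzero diagonal implies independence), and this fails over non-fields: in $(\mathbb{Z}/4\mathbb{Z})^2$ the cyclic group $\langle(2,1)\rangle$ contains the "triangular" elements $(2,1)$ and $(0,2)$ but has rank $1$.

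The repair does not need a field, and your $\sigma_k$ and $g_\ell$ already suffice. Take $\Phi_m=(\sigma_{n_0},\dots,\sigma_{n_0+m})$ with values in $A^{m+1}$ (or compose with a character $A\to\mathbb{Q}/\mathbb{Z}$ nonvanishing at $\bar s$, which does exist by injectivity of $\mathbb{Q}/\mathbb{Z}$). Let $B$ be the image of $\Phi_m$ and $D_j\leq B$ the subgroup of elements whose coordinates indexed by $k<j$ vanish; your computation shows $\Phi_m(g_j)\in D_j\setminus D_{j+1}$, so the chain $B=D_{n_0}\supsetneq D_{n_0+1}\supsetneq\cdots\supsetneq D_{n_0+m+1}$ gives $|B|\geq 2^{m+1}$, while the exponent of $B$ is bounded by $\exp(A)$ independently of $m$; hence these continuous finite quotients of $G$ require an unbounded number of generators, and $G$ is not topologically finitely generated. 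This is essentially how the paper concludes, surjecting onto abelian groups built from the image of $\St_G(n_0-1)|_{\X[n_0]}$ in the abelianization rather than onto $(\mathbb{Z}/p\mathbb{Z})^{m+1}$.
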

\begin{proof}
The proof uses the same arguments as in the first part of the proof above. Fix $n\geq d$ such that the
commutator of $G_n:=G|_{\X[n]}$ does not contain $\St_{G}(n-1)|_{\X[n]}$. For every $k\in\mathbb{N}$
consider the map
\[
\varphi_k:G\rightarrow G_n/[G_n,G_n], \quad \varphi_k(g)=\prod_{v\in X^k}g_{(v)}|_{\X[n]} [G_n,G_n].
\]
Since $G_n/[G_n,G_n]$ is abelian every map $\varphi_k$ is a homomorphism. Now for every $m\in\mathbb{N}$ consider the
homomorphism $\varphi:G\rightarrow \prod_{k=1}^m G_n/[G_n,G_n]$, $\varphi(g)=(\varphi_k(g))_{k=1}^m$. For every $k$ and
every pattern $a\in\St_{G}(n-1)|_{\X[n]}$ there exists $g$ in the rigid stabilizer $\RiSt_{G}(v)$ of a vertex $v\in X^k$
such that $g_{(v)}|_{\X[n]}=a$, and thus $\varphi_k(g)=a$ and $\varphi_i(g)=e$ for $i<k$. Since
$\St_{G}(n-1)|_{\X[n]}/[G_n,G_n]$ is a homomorphic image of $G_n$, it follows that the abelian group $\prod_{k=1}^m
\St_{G}(n-1)|_{\X[n]}/[G_n,G_n]$ is a homomorphic image of $G$ for every $m$. Hence $G$ is not finitely generated.
\end{proof}

The next statement generalizes Proposition~2 in \cite{S:pattern}.

\begin{corollary}\label{cor_abelian}
Let $\Pa$ be an abelian pattern group. The group $G_{\Pa}$ is finitely generated if and only if it is finite.
\end{corollary}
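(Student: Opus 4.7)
The plan is to derive the nontrivial direction from Proposition~\ref{prop_not_fin_gen} applied at the minimum possible level $n=d$. The converse is immediate since a finite group is topologically finitely generated.

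First I would reduce to the case where $\Pa$ is minimal. The minimization of any pattern group $\Pa$ of depth $d$ is obtained by iteratively deleting vertices of $\gr_{\Pa}$ without a full set of outgoing labels, and the result equals $G_{\Pa}|_{\X[d]}$, which is a subgroup of $\Pa$. Subgroups of abelian groups are abelian, so the minimal pattern group representing $G_{\Pa}$ is again abelian, and I may assume that $\Pa$ itself is minimal and abelian.

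Now I would apply Proposition~\ref{prop_not_fin_gen} with $n=d$. Since $\Pa$ is minimal, $G_{\Pa}|_{\X[d]}=\Pa$, so
\[
[G_{\Pa}|_{\X[d]},G_{\Pa}|_{\X[d]}]=[\Pa,\Pa]=\{e\}
\]
because $\Pa$ is abelian. On the other hand, $\St_{G_{\Pa}}(d-1)|_{\X[d]}=\St_{\Pa}(d-1)$. Hence, if $\St_{\Pa}(d-1)$ is nontrivial, the commutator of $G_{\Pa}|_{\X[d]}$ fails to contain $\St_{G_{\Pa}}(d-1)|_{\X[d]}$, and Proposition~\ref{prop_not_fin_gen} gives that $G_{\Pa}$ is not finitely generated. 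Contrapositively, finite generation of $G_{\Pa}$ forces $\St_{\Pa}(d-1)=\{e\}$, and then Proposition~\ref{prop_finite} implies that $G_{\Pa}$ is finite (and isomorphic to $\Pa$).

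There is no real obstacle here: once one observes that the minimization of an abelian pattern group stays abelian, the entire argument is a one-line invocation of Proposition~\ref{prop_not_fin_gen} at the base level $n=d$, where the commutator is automatically trivial.
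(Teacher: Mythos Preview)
Your proof is correct and follows essentially the same approach as the paper, which derives the corollary directly from Proposition~\ref{prop_finite} and Proposition~\ref{prop_not_fin_gen} applied with $n=d$. Your explicit reduction to a minimal pattern group (needed so that $G_{\Pa}|_{\X[d]}=\Pa$ and so that Proposition~\ref{prop_finite} applies) is a detail the paper leaves implicit, but the underlying argument is identical.
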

\begin{proof}
The statement follows from Proposition~\ref{prop_finite} and Proposition~\ref{prop_not_fin_gen} with $n=d$.
\end{proof}

\begin{corollary}
Take a cyclic subgroup $C<\Sym(X)$ and consider the group $C\wr_X C$ as a natural subgroup of $\Aut\X[2]\cong
\Sym(X)\wr_X\Sym(X)$. Then for any nilpotent pattern group $\Pa<C\wr_X C$ the group $G_{\Pa}$ is finitely generated if
and only if it is finite.
\end{corollary}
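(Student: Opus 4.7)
The plan is to apply Proposition~\ref{prop_not_fin_gen} with $n = d = 2$. First replace $\Pa$ by the minimal pattern group $\Pa'$ for $G_{\Pa}$; since $\Pa' \le \Pa$, the group $\Pa'$ is still nilpotent and still contained in $C\wr_X C$, and $G_{\Pa'} = G_{\Pa}$. If $G_{\Pa}$ is finite there is nothing to prove, so assume $G_{\Pa}$ is infinite, in which case $\St_{\Pa'}(1)$ is nontrivial by Proposition~\ref{prop_finite}. Because $G_{\Pa}|_{\X[2]} = \Pa'$ and $\St_{G_{\Pa}}(1)|_{\X[2]} = \St_{\Pa'}(1)$, it suffices to show that $\St_{\Pa'}(1) \not\subseteq [\Pa',\Pa']$; Proposition~\ref{prop_not_fin_gen} will then yield that $G_{\Pa}$ is not finitely generated.

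The key observation is that the top-level action of $\Pa'$ factors through a subgroup of the cyclic group $C$, so $\Pa'/\St_{\Pa'}(1)$ is cyclic. Suppose toward a contradiction that $\St_{\Pa'}(1) \subseteq [\Pa',\Pa']$; then the abelianization $\Pa'/[\Pa',\Pa']$ is a quotient of the cyclic group $\Pa'/\St_{\Pa'}(1)$, hence itself cyclic. Now I invoke the standard fact that a finite nilpotent group with cyclic abelianization is itself cyclic: such a group decomposes as the direct product of its Sylow subgroups, each of whose abelianization is a direct factor of the cyclic abelianization and therefore cyclic, and Burnside's basis theorem forces every $p$-group with cyclic abelianization to be cyclic. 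Thus $\Pa'$ is cyclic, so $[\Pa',\Pa'] = \{1\}$ and $\St_{\Pa'}(1) = \{1\}$, contradicting nontriviality.

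The main step is the nilpotent-with-cyclic-abelianization lemma; once it is in hand the rest of the proof is a direct application of Proposition~\ref{prop_not_fin_gen}. The overall structure mirrors that of Corollary~\ref{cor_abelian}, where the analogous step is trivial because in the abelian case the group equals its abelianization, while the nilpotent hypothesis together with the cyclic top-level action $\Pa' \to C$ plays the role of ``forcing'' the abelianization to remain manageable.
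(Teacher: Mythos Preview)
Your proof is correct and follows the same overall strategy as the paper: apply Proposition~\ref{prop_not_fin_gen} with $n=d=2$, using that $\Pa'/\St_{\Pa'}(1)$ is cyclic so that $[\Pa',\Pa']\le\St_{\Pa'}(1)$, and then handle the equality case $[\Pa',\Pa']=\St_{\Pa'}(1)$ via nilpotency. You are also more careful than the paper in explicitly passing to the minimal pattern group first.

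The one genuine difference is in how the equality case is dispatched. The paper argues directly with a commutator identity: using cyclicity of $\Pa/\St_\Pa(1)$ one has $[a,b]=[a,a^kb]$ with $a^kb\in\St_\Pa(1)$, whence $[\Pa,\Pa]=[\Pa,\St_\Pa(1)]$, i.e.\ $\gamma_2(\Pa)=\gamma_3(\Pa)$, which nilpotency forces to be trivial. You instead package this as the lemma ``finite nilpotent with cyclic abelianization $\Rightarrow$ cyclic'' via Sylow decomposition and Burnside's basis theorem. Both routes are short; the paper's is a touch more elementary (no appeal to the structure of finite nilpotent groups), while yours identifies a reusable structural fact.
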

\begin{proof}
Since $\Pa/\St_{\Pa}(1)$ is cyclic, the commutator $[\Pa,\Pa]$ is a subgroup of $\St_{\Pa}(1)$. If it
is a proper subgroup then the group $G_{\Pa}$ is not finitely generated by
Proposition~\ref{prop_not_fin_gen}. Suppose $[\Pa,\Pa]=\St_{\Pa}(1)$. For any $a,b\in\Pa$ there exists
$k\in\mathbb{N}$ such that $a^kb$ or $b^ka$ belongs to $\St_{\Pa}(1)$. Using the equality
$[a,b]=[a,a^kb]=[b^ka,b]$ we obtain that $[\Pa,\Pa]=[\Pa,\St_{\Pa}(1)]$. Since $\Pa$ is nilpotent, the
last equality implies that $[\Pa,\Pa]=\St_{\Pa}(1)=\{1\}$ and hence the group $G_{\Pa}$ is finite by
Proposition~\ref{prop_finite}.
\end{proof}

\section{A few classification results}\label{sec_Examples}

In this section we classify self-similar groups of finite type for the binary alphabet $X=\{0,1\}$ and
depth $\leq 4$. All computations were made in GAP. Our strategy for classifying self-similar groups of
finite type of a given depth $d$ is the following. First we find all subgroups in $\Aut\X[d]$, then
minimize all subgroups and obtain the number of all minimal pattern groups, which is equal to the
number of self-similar groups of finite type of a given depth as subgroups in $\Aut X^{*}$. Further we
distinguish all finite groups using Proposition~\ref{prop_finite}. Then we apply
Proposition~\ref{prop_not_fin_gen} for small values of $n$ to distinguish groups that are not finitely
generated. An infinite self-similar group over the binary alphabet is level-transitive (see
\cite[Lemma~3]{classif}), hence the rest of the groups are level-transitive and we can apply
Theorem~\ref{thm_fin_gen}. In this way it was possible to obtain the following results.

\textbf{Depth $d=2$.} This case was treated in \cite{S:pattern}. There are ten subgroups in
$\Aut\X[2]$, six minimal pattern subgroups, and hence six self-similar groups of finite type. Among
them there are three finite groups, namely the trivial group and two groups isomorphic to $C_2$, and
the other three groups are not finitely generated (Proposition~\ref{prop_not_fin_gen} works with
$n=2$).

\textbf{Depth $d=3$.} There are $576$ subgroups in $\Aut\X[3]$, $60$ minimal pattern subgroups, and
hence $60$ self-similar groups of finite type. Among them there are $23$ finite groups, namely the
trivial group, two groups isomorphic to $C_2$, four groups isomorphic to $C_2\times C_2$, $16$ groups
isomorphic to the dihedral group $D_8$. The other $37$ groups are not finitely generated ($27$ groups
satisfy Proposition~\ref{prop_not_fin_gen} with $n=3$ and $10$ groups with $n=4$).

\begin{corollary}
A self-similar group of finite type given by patterns of depth $d\leq 3$ over the binary alphabet is
either finite or not finitely generated.
\end{corollary}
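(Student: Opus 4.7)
The plan is to reduce the corollary to the GAP enumeration sketched in the preceding paragraphs. The key observation is that any self-similar group of finite type of depth at most $3$ also admits a minimal pattern group of depth exactly $3$: given $G=G_\Pa$ with $\Pa$ of depth $d<3$, one replaces $\Pa$ by the (minimal) pattern group $G|_{\X[3]}$, which describes the same group $G$. Hence it suffices to inspect the $60$ minimal pattern groups of depth $3$ already tabulated above.

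Concretely, I would first reproduce the enumeration: iterate over the $576$ subgroups of $\Aut\X[3]$, apply the pattern-graph reduction to each (iteratively deleting vertices of $\Gamma_\Pa$ lacking an outgoing edge of some label $x\in X$), and keep one representative per equivalence class, arriving at the $60$ minimal $\Pa$. Next, separate the finite from the infinite cases by Proposition~\ref{prop_finite}: compute $\St_\Pa(2)$ and declare $G_\Pa$ finite exactly when it is trivial. This isolates the $23$ finite groups listed (the trivial group, two copies of $C_2$, four copies of $C_2\times C_2$, and sixteen copies of $D_8$).

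On each of the remaining $37$ groups I would apply Proposition~\ref{prop_not_fin_gen} with $n=3$ first, and then $n=4$ whenever the test at $n=3$ is inconclusive. For this I must compute the truncations $G_\Pa|_{\X[n]}$ and $\St_{G_\Pa}(n-1)|_{\X[n]}$; either approach works, namely enumerating all labeled-graph homomorphisms from the $n$-truncated tree into $\Gamma_\Pa$, or building level stabilizers recursively via the branching decomposition $\St_{G_\Pa}(m)\cong \prod_{X^{m-d+1}}\St_\Pa(d-1)$ valid for $m\geq d$. With these two groups in hand, one tests whether the commutator of $G_\Pa|_{\X[n]}$ fails to contain $\St_{G_\Pa}(n-1)|_{\X[n]}$; if so, $G_\Pa$ is not finitely generated by Proposition~\ref{prop_not_fin_gen}.

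The main obstacle is computational rather than theoretical: one has to verify reliably that every one of the $37$ infinite groups is already caught at $n\in\{3,4\}$ — the paper's claim is $27$ at $n=3$ and $10$ at $n=4$ — so that the more delicate criterion of Theorem~\ref{thm_fin_gen} is never required. Granted this verification, every depth-$3$ minimal pattern group $\Pa$ over the binary alphabet yields a $G_\Pa$ that is either finite by Proposition~\ref{prop_finite} or not finitely generated by Proposition~\ref{prop_not_fin_gen}, and the corollary follows.
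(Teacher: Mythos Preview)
Your proposal is correct and follows essentially the same approach as the paper: both rest on the GAP enumeration of the $60$ minimal pattern groups of depth~$3$, use Proposition~\ref{prop_finite} to split off the $23$ finite ones, and then dispose of the remaining $37$ via Proposition~\ref{prop_not_fin_gen} at $n=3$ or $n=4$. Your extra reduction from $d\le 3$ to $d=3$ (replacing $\Pa$ by $G|_{\X[3]}$) is a clean way to absorb the $d\le 2$ cases, which the paper instead handles separately, but otherwise the arguments coincide.
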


\textbf{Depth $d=4$.} There are $4544$ self-similar groups of finite type. Among them there are $1535$
finite groups, namely the trivial group, two groups isomorphic to $C_2$, four groups isomorphic to
$C_2\times C_2$, $16$ groups isomorphic to $D_8$, eight groups isomorphic to $C_2\times C_2\times C_2$,
$96$ groups isomorphic to $C_2\times D_8$, $128$ groups isomorphic to $(C_2\times C_2\times C_2\times
C_2) \rtimes C_2$, $256$ groups isomorphic to $(((C_4\times C_2) \rtimes C_2) \rtimes C_2) \rtimes
C_2$, and $1024$ groups isomorphic to $\Aut\X[3]\cong C_2\wr_X C_2\wr_X C_2$. Among the rest of the
groups there are $2977$ not finitely generated ($1235$ groups satisfy
Proposition~\ref{prop_not_fin_gen} with $n=4$, $778$ groups with $n=5$, $508$ groups with $n=6$, $200$
groups with $n=7$, and $256$ groups with $n=8$) and $32$ finitely generated groups that satisfy
Theorem~\ref{thm_fin_gen} with $n=6$. The pattern groups of these $32$ self-similar groups of finite
type all have order $4096$, their restriction on $\X[3]$ is equal to $\Aut\X[3]$, and among them there
are $20$ pairwise non-isomorphic groups. These pattern groups can be described as follows. Let us
consider the group $\Aut\X[4]$ as a natural subgroup of the symmetric group~$\Sym(16)$ on the set
$\{1,2,\ldots,16\}\leftrightarrow X^4$ and fix the permutations:
\[\begin{array}{l}
a_1=(1,9)(2,10)(3,11)(4,12)(5,13)(6,14)(7,15)(8,16)\\
a_2=(1,10,2,9)(3,11)(4,12)(5,14,6,13)(7,15)(8,16)\\
a_3=(1,10)(2,9)(3,11)(4,12)(5,13)(6,14)(7,15)(8,16)\\
a_4=(1,9,2,10)(3,11)(4,12)(5,14,6,13)(7,15)(8,16)
\end{array}\]
\[\begin{array}{lll}
b_1=(1,5)(2,6)(3,7)(4,8)(9,10) \quad & c_1=(1,3)(2,4) & c_3=(1,3)(2,4)(5,6)\\
b_2=(1,6)(2,5)(3,7)(4,8)(9,10) \quad & c_2=(1,4,2,3) & c_4=(1,4,2,3)(5,6)
\end{array}
\]
Then the $32$ pattern groups mentioned above is the family of groups $\Pa_{ijk}=\langle
a_i,b_j,c_k\rangle$. In this family: the self-similar group of finite type $G_{\Pa_{123}}$ is the
closure of the Grigorchuk group and $G_{\Pa_{111}}$ is the closure of the iterated monodromy group of
$z^2+i$ \cite{GSS:z2+i}.

\end{document}